\documentclass[12pt]{amsart}
\input xy  \xyoption{all}

\oddsidemargin -.5cm
\evensidemargin -.5cm
\textwidth 17.3cm

\DeclareMathOperator\K{\mathbb K}
\DeclareMathOperator\LL{\mathbb L}
\DeclareMathOperator\A{\mathbb A}
\DeclareMathOperator\B{\mathbb B}
\DeclareMathOperator\X{\mathbb X}
\DeclareMathOperator\Y{\mathbb Y}
\DeclareMathOperator\N{\mathbb N}
\DeclareMathOperator\QQ{\mathcal Q}
\DeclareMathOperator\E{\mathcal E}
\DeclareMathOperator\M{\mathcal M}
\DeclareMathOperator\F{\mathcal F}

\DeclareMathOperator\C{\mathbb C}
\DeclareMathOperator\Z{\mathbb Z}

\DeclareMathOperator\Gr{{\mathrm{Gr}}}

\DeclareMathOperator\I{\mathcal I}
\DeclareMathOperator\ii{{\boldsymbol{i}}}
\DeclareMathOperator\rr{{\boldsymbol{r}}}
\DeclareMathOperator{\rk}{rk}

\DeclareMathOperator{\Hom}{Hom}
\DeclareMathOperator{\DDelta}{\boldsymbol{\Delta}}
\DeclareMathOperator{\CC}{\boldsymbol{C}}

\newtheorem{fact}{Fact}[section]
\newtheorem{lemma}[fact]{Lemma}
\newtheorem{theorem}[fact]{Theorem}
\newtheorem{definition}[fact]{Definition}
\newtheorem{example}[fact]{Example}
\newtheorem{rremark}[fact]{Remark}
\newenvironment{remark}{\begin{rremark} \rm}{\end{rremark}}
\newtheorem{proposition}[fact]{Proposition}

\newtheorem{conjecture}[fact]{Conjecture}

\title{Quiver polynomials in iterated residue form}

\author{R. Rim\'anyi}
\address{Department of Mathematics, University of North Carolina at Chapel Hill, USA}
\address{Department of Mathematics, University of Geneva, Switzerland}
\email{rimanyi@email.unc.edu}

\begin{document}

\begin{abstract}
Degeneracy loci polynomials for quiver representations generalize several important polynomials in algebraic combinatorics. In this paper we give a nonconventional generating sequence description of these polynomials, when the quiver is of Dynkin type.
\end{abstract}

\maketitle

\section{Introduction}

Associate nonnegative integers to the vertices of an oriented graph. This combinatorial data determines a so-called quiver representation of a group $\Gamma$ on a vector space $V$. If the underlying unoriented graph is of simply laced Dynkin type, then the quiver representation has finitely many orbits. Let $\Omega$ be one of these orbits. The main object of this paper is the equivariant fundamental class
$$[\overline{\Omega}]\in H^*_\Gamma(V).$$
The ring $H^*_\Gamma(V)$ is a polynomial ring, hence one calls the class $[\overline{\Omega}]$ a quiver polynomial.

\smallskip

{\em Quiver polynomials are universal polynomials expressing degeneracy loci.} Tracing back the definitions of equivariant cohomology we obtain that quiver polynomials are universal polynomials representing degeneracy loci. We will illustrate this statement with the following example. Let $\E_1$ and $\E_2$ be bundles of rank $e_1$ and $e_2$ over the compact complex manifold $X$, and let $\phi$ be a bundle map $\E_1\to \E_2$ which satisfies a certain transversality condition. Let $r\leq e_1\leq e_2$, and define the degeneracy locus
$$\Omega_r(\phi)=\{x\in X: \rk(\phi_x: (\E_1)_x \to (\E_2)_x )\leq e_1-r.\}$$
The Giambelli-Thom-Porteous formula claims that the fundamental cohomology class $[\Omega_r(\phi)]\in H^*(X)$ can be expressed as $\det\left(c_{r+j-i}(\E_1^*-\E_2^*)\right)_{i,j=1,\ldots,e_2-e_1+r}$.
The polynomial $\det\left(c_{r+j-i}\right)_{i,j}$
is a quiver polynomial. It only depends on the combinatorial data $\bullet \to \bullet$ (the graph), $e_1, e_2$ (the ranks of the bundles), $r$ (the orbit). Nevertheless it has the universality property that substituting $c_i(\E_1^*-\E_2^*)$ into the variables $c_i$, the polynomial expresses the fundamental class of the degeneracy locus $\Omega_r(\phi)$.

Quiver polynomials are analogous universal polynomials for more complicated degeneracy loci. This more complicated setting involves more than two vector bundles over $X$ (the vertices of the graph), with some bundle maps among them (the edges of the graph). The degeneracy locus is then defined by degenerations of a diagram of linear maps between vector spaces.

\smallskip

{\em History.} Quiver polynomials of Dynkin type generalize several important polynomials in Algebraic Combinatorics, for example
the Giambelli-Thom-Porteous formulae \cite{porteous}, the double Schur and Schubert polynomials of Schubert calculus \cite{fulton_universal}, and the
quantum \cite{FGP} and universal Schubert polynomials \cite{fulton_universal}.

There has been a lot of activity in the past decade or so, to find various formulae and/or algorithms to calculate quiver polynomials of certain Dynkin types, such as \cite{B:Gr, buch, bfr, buch-fulton,bkty, br, BSY, FR:quiver, FGP, KS, KMS}. By now we can claim that there are effective methods to find any such quiver polynomial. The three papers that approach the problem for general quivers of Dynkin type A,D,E are \cite{FR:quiver}, \cite{KS}, and \cite{buch}.

\smallskip

{\em The structure of quiver polynomials.} The key challenge of the theory is understanding the structure of quiver polynomials. The following two phenomena have been discovered/conjectured:
\begin{itemize}
\item{} Stability. In \cite{structure} the authors study an analogous problem: Thom polynomials of singularities. They find that equivariant fundamental classes display unexpected stability properties \cite[Theorems 2.1, 2.4]{structure}, which enables them to organize infinitely many such classes into a (nonconventional) rational generating sequence. This phenomenon is further developed and organized in \cite{bsz}, \cite{ts}, \cite{kaza} (under the name of Iterated Residue generating sequence). Theorem 2.1 of \cite{structure} applies to quiver representations as well.
\item{} Positivity. In \cite{buch} Buch proved that quiver polynomials are linear combinations of certain products of Schur polynomials. He conjectured that the coefficients in this expression are all non-negative. These coefficients provide a wide generalization of several combinatorial constants in the style of Littlewood-Richardson coefficients. The usual techniques of proving positivity in equivariant cohomology (such as geometric intersection numbers, Gr\"obner degenerations, interpolation theory, counting arguments) so far failed to prove Buch's conjecture.
\end{itemize}
In this paper we prove an Iterated Residue description of quiver polynomials.
 This description reduces Buch's conjecture to a positivity conjecture on the coefficients of certain expansions of rational functions. R.~Kaliszewski \cite{ryan_phd}  has promising initial results towards proving this algebraic positivity conjecture.

\bigskip

The author thanks J. Allman, A. S. Buch, L. Feh\'er, R. Kaliszewski, M. Kazarian, and A.~Szenes for valuable discussions on the topic, as well as the hospitality of MPIM Bonn and the University of Geneva. The author was partially supported by NSF grant DMS-1200685.

\section{Combinatorics of polynomials}

\subsection{$\Delta$-polynomials, Schur polynomials} \label{sec:schur}

Let $c_n$ be elements of a ring for $n\in \Z$. Assume that $c_0=1$ and $c_n=0$ for $n<0$.
Write $\K=(c_n)_{n\in \Z}$. Let $\lambda=(\lambda_1,\ldots,\lambda_r)\in \Z^r$ be an integer vector.

\begin{definition}
Define the $\Delta$-polynomial
$$\Delta_\lambda=\Delta_\lambda(\K)=\det\left( c_{\lambda_i+j-i} \right)_{i,j=1,\ldots,r}.$$\
\end{definition}

If $\lambda$ is a partition, ie. if $\lambda_1\geq \lambda_2 \geq \ldots \geq \lambda_r$, then we call a $\Delta$-polynomial a {\em Schur polynomial}. It is known that if the underlying ring is $\Z[c_1,c_2,\ldots]$, then the Schur polynomials form an additive basis of this ring.

When we want to emphasize that a $\Delta$-polynomial is not a Schur polynomial, then we call it a {\em fake} Schur polynomial. Observe that a fake Schur polynomial is either 0, or is equal to plus or minus a Schur polynomial. For example $\Delta_{34}=0$, $\Delta_{14}=-\Delta_{32}$, $\Delta_{154}=\Delta_{433}$.

\subsection{The $\CC$ and $\DDelta$-operations}

Let $p$ be a positive integer. For all $k=1,\ldots,p$ let $\K_k$ $=$ $(c_{kn})_{n\in \Z}$ be a set of elements in a ring, as in Section \ref{sec:schur}. In particular, for all $k$ we have $c_{k0}=1$ and $c_{kn}=0$ for $n<0$.

Let $\A_1,\ldots,\A_p$ ($|\A_k|=r_k$)  be disjoint finite ordered sets of variables: $\A_k=\{u_{k1},\ldots,u_{kr_k}\}$.

\begin{definition}
For a Laurent monomial in the variables $\cup_k \A_k$ define
\[
\CC
\left(\prod_{k=1}^p \prod_{s=1}^{r_k} u_{ks}^{\lambda_{ks}}
 \right)=
\CC_{\A_1,\ldots,\A_p}^{\K_1,\ldots,\K_p}
\left(
\prod_{k=1}^p \prod_{s=1}^{r_k} u_{ks}^{\lambda_{ks}}
 \right)
 =
\prod_{k=1}^p \prod_{s=1}^{r_k} c_{k\lambda_{ks}}.
\]
For an element of $\Z[[u_{ks}^{\pm1}]]$, which has finitely many monomials with non-0 $\CC$-value, extend this operation linearly.
\end{definition}

\begin{definition}
For a Laurent monomial in the variables $\cup_k \A_k$ define
\[
\DDelta
\left(\prod_{k=1}^p \prod_{s=1}^{r_j} u_{ks}^{\lambda_{ks}}
 \right)=
\DDelta_{\A_1,\ldots,\A_p}^{\K_1,\ldots,\K_p}
\left(
\prod_{k=1}^p \prod_{s=1}^{r_j} u_{ks}^{\lambda_{ks}}
 \right)
 =
\prod_{k=1}^p \Delta_{\lambda_{k1},\ldots,\lambda_{kr_k}}(\K_k)
\]
For an element of $\Z[[u_{ks}^{\pm1}]]$, which has finitely many monomials with non-0 $\DDelta$-value, extend this operation linearly.
\end{definition}

\begin{example} \rm
Let $\A_1=\{u_1,u_2\}$, $\A_2=\{v_1,v_2\}$, $\A_3=\{w\}$, and let $\K_1=(c_n)$, $\K_2=\K_3=(d_n)$. Here are some examples for the $\CC$ and $\DDelta$-operations.
\begin{itemize}
\item{} $
\CC(u_1^2 u_2^1 v_1^5 v_2^1 w^2 + u_1^2 u_2^1 v_1^1 v_2^2 w^3
        + u_1^1 u_2^4 +  v_1^{-1} v_2^2 + u_1^{-1} u_2^{-2})=$ \hfill \

\ \hfill $c_2c_1d_5d_1d_2 + c_2c_1d_1d_2d_3+ c_1c_4+\underbrace{d_{-1}}_{0}d_2+\underbrace{c_{-1}}_{0} \underbrace{c_{-2}}_{0}.$
\item{} $
\CC(u_1^2u_2^2(u_1-u_2))=\CC(u_1^3u_2^2-u_1^2u_2^3)=c_3c_2-c_2c_3=0.$
\item{}
$\CC\left(\frac{u_1u_2}{1-\frac{u_1}{u_2}}\right)=\CC(u_1u_2+u_1^2+u_1^3u_2^{-1}+\ldots)=c_1^2+c_2.$
\item{}
$\DDelta(u_1^2 u_2^1 v_1^5 v_2^1 w^2 + u_1^2 u_2^1 v_1^1 v_2^2 w^3
        + u_1^1 u_2^4 +  v_1^{-1} v_2^2 + u_1^{-1} u_2^{-2})= \qquad\qquad\qquad\qquad\qquad\qquad\qquad
$
$\qquad \Delta_{21}(\K_1)\Delta_{51}(\K_2)\Delta_2(\K_2)+ \Delta_{21}(\K_1)\underbrace{\Delta_{12}(\K_2)}_0\Delta_3(\K_2) +
\underbrace{\Delta_{14}(\K_1)}_{-\Delta_{32}(\K_1)} + \underbrace{\Delta_{-1,2}(\K_2)}_{-\Delta_{10}(\K_2)} + \underbrace{\Delta_{-1,-2}(\K_1)}_0.$
\item{}
$\DDelta\left(\frac{u_1u_2}{1-\frac{u_1}{u_2}}\right)=\DDelta(u_1u_2+u_1^2+u_1^3u_2^{-1}+\ldots)=
\Delta_{11}(\K_1)+\Delta_{20}(\K_1)+ \underbrace{ \Delta_{3,-1}(\K_1)+\ldots}_0 .$
\end{itemize}
Here we used a convention, that we will keep using in the whole paper: by the rational function $1/(1-x/y)$ we always mean its expansion in the $|x|\ll |y|$ range, that is, $1+(x/y)+(x/y)^2+\ldots$.
\end{example}

The two defined operations are related with each other.

\begin{lemma} \label{lemma:C vs Delta}
Let $\A=\{u_1,\ldots, u_{r}\}$, and let $f$ be a function in these variables. Then
\[
\CC_{\A}^{\K}\left(f \cdot  \prod_{i<j} \left(1-\frac{u_i}{u_j}\right) \right) =
\DDelta_{\A}^{\K} \left( f \right).
\]
\end{lemma}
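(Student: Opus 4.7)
The plan is to reduce to a single monomial by linearity, expand the Vandermonde-like product $\prod_{i<j}(1-u_i/u_j)$ as an antisymmetrizer, and then recognize the output of $\CC$ as the Leibniz expansion of the determinant defining $\Delta_\lambda$.

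First, since both $\CC^{\K}_\A$ and $\DDelta^{\K}_\A$ are defined by extending monomial values linearly, and since $\prod_{i<j}(1-u_i/u_j)$ is a Laurent polynomial, it suffices to verify the identity for a single Laurent monomial $f=\prod_{i=1}^r u_i^{\lambda_i}$. So I fix such an $f$ and an integer vector $\lambda=(\lambda_1,\ldots,\lambda_r)$.

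Next I expand the product. Multiplying numerator and denominator by $\prod_j u_j^{j-1}$ and using the standard Vandermonde determinant formula $\prod_{i<j}(u_j-u_i)=\det(u_i^{j-1})_{i,j=1}^r$, I get
\[
\prod_{i<j}\left(1-\frac{u_i}{u_j}\right)=\frac{\prod_{i<j}(u_j-u_i)}{\prod_j u_j^{j-1}}=\sum_{\sigma\in S_r}\sgn(\sigma)\prod_{i=1}^r u_i^{\sigma(i)-i}.
\]
Multiplying by $f$ and applying $\CC^{\K}_\A$ monomial-by-monomial gives
\[
\CC^{\K}_\A\!\left(f\cdot\prod_{i<j}\Bigl(1-\frac{u_i}{u_j}\Bigr)\right)=\sum_{\sigma\in S_r}\sgn(\sigma)\prod_{i=1}^r c_{\lambda_i+\sigma(i)-i}.
\]
The right-hand side is exactly the Leibniz expansion of $\det(c_{\lambda_i+j-i})_{i,j=1}^r=\Delta_\lambda(\K)$, which by definition equals $\DDelta^{\K}_\A(f)$. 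This completes the reduction.

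There is essentially no main obstacle; the only thing to take care of is bookkeeping. Concretely, I would double-check the Vandermonde expansion to make sure the exponent $\sigma(i)-i$ (and not, say, $\sigma^{-1}(i)-i$) is the one that matches the row/column indexing in the determinant $\det(c_{\lambda_i+j-i})$, and I would briefly justify that extending the identity by linearity from monomials to formal series in the domain of definition of $\CC$ and $\DDelta$ is harmless because the multiplier $\prod_{i<j}(1-u_i/u_j)$ is a polynomial, so each monomial in $f$ contributes only finitely many monomials to the product.
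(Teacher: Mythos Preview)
Your proposal is correct and follows essentially the same approach as the paper: reduce to a monomial, expand the discriminant $\prod_{i<j}(1-u_i/u_j)$ as $\sum_\sigma \sgn(\sigma)\prod_i u_i^{\sigma(i)-i}$, apply $\CC$, and recognize the Leibniz expansion of $\det(c_{\lambda_i+j-i})$. The paper only sketches this via the $r=2$ example $\CC((u_1^8u_2^5)(1-u_1/u_2))=c_8c_5-c_9c_4=\Delta_{85}$ and leaves the details to the reader; your write-up supplies exactly those details.
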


The proof is an application of simple properties of the determinant and the discriminant $\prod_{i<j} (1-u_i/u_j)$, illustrated by the following special case:
\[ \CC\left( (u_1^8u_2^5)(1-u_1/u_2)\right) = \CC\left( u_1^8u_2^5 - u_1^9u_2^4\right)=
c_8c_5-c_9c_4= \Delta_{85} = \DDelta(u_1^8 u_2^5).\]
We leave details to the reader.

\subsection{Constant-term formula}

The following lemma is a reformulation of the definition of $\CC$.

\begin{lemma} \label{lemma:C_vs_const}
Let $\K_k=(c_{kn})$, $\A_k=\{u_{ks}\}$. Then
\[
\CC_{\A_1,\ldots,\A_p}^{\K_1,\ldots,\K_p}(f(u_{ks})) =
\left\{ f(u_{ks}) \cdot \prod_{k,s} \sum_{n=0}^\infty \frac{c_{kn}}{ u_{ks}^n} \right\}_{u_{ks}},
\]
where $\{\ \}_{u_{ks}}$ means the constant coefficient in the $u_{ks}$ variables.
\end{lemma}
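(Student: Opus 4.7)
The plan is to verify the identity monomial-by-monomial, exploiting the linearity in $f$ of both sides and the fact that the right-hand side factors nicely over the index pairs $(k,s)$. Both $\CC$ and the constant-term operator on the right are defined by linear extension from Laurent monomials, so it suffices to prove the identity when $f$ is a single Laurent monomial $\prod_{k,s}u_{ks}^{\lambda_{ks}}$.

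Step one, the left-hand side. By the very definition of $\CC$ we have $\CC(\prod_{k,s}u_{ks}^{\lambda_{ks}})=\prod_{k,s}c_{k\lambda_{ks}}$, using the convention $c_{k\lambda}=0$ for $\lambda<0$ and $c_{k0}=1$.

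Step two, the right-hand side. Substituting the monomial $f$ and grouping factors by the index pair $(k,s)$, the expression inside the braces becomes
\[
\prod_{k,s}\left(u_{ks}^{\lambda_{ks}}\sum_{n=0}^\infty \frac{c_{kn}}{u_{ks}^n}\right)=\prod_{k,s}\sum_{n=0}^\infty c_{kn}\, u_{ks}^{\lambda_{ks}-n}.
\]
Since distinct factors involve distinct variables, the constant-term operator $\{\,\cdot\,\}_{u_{ks}}$ in all variables factors as the product of the one-variable constant-term operators. In each one-variable series the constant term corresponds to the unique value $n=\lambda_{ks}$, which occurs in the sum iff $\lambda_{ks}\geq 0$; in that case it contributes $c_{k\lambda_{ks}}$, and otherwise the constant term is $0$, which matches the convention $c_{k\lambda_{ks}}=0$ on the left. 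Taking the product over $(k,s)$ recovers $\prod_{k,s}c_{k\lambda_{ks}}$, matching step one.

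The only subtlety, rather than an obstacle, is harmonizing the finiteness hypotheses under which the two linear extensions are defined. The monomial computation above shows that a given Laurent monomial of $f$ contributes nontrivially to the left-hand side exactly when all $\lambda_{ks}\geq 0$, and this is identically the condition for a nonzero contribution on the right-hand side. Hence the two extensions agree on the common class of $f\in\Z[[u_{ks}^{\pm 1}]]$ for which either side is defined, and the identity follows.
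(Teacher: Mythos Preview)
Your argument is correct and is exactly the verification the paper leaves implicit: the lemma is presented there as ``a reformulation of the definition of $\CC$'' with no proof, and your monomial-by-monomial check is the natural (and essentially only) way to unpack that reformulation.
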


\begin{remark}
Taking the constant term (after shifting of exponents) is an iterated residue operation. Hence, we call $\CC$ and $\DDelta$ Iterated Residue operations, c.f. \cite{bsz}, \cite[Sect.11]{ts}, \cite{kaza}.
\end{remark}

\section{Quivers of bundles, and their characteristic classes}
Let $\QQ=(\QQ_0,\QQ_1)$ be an oriented graph (the quiver), with the set $\QQ_0=\{1,2,\ldots,N\}$ of vertexes, and a finite set $\QQ_1$ of arrows. An arrow $a\in \QQ_1$ has a tail $t(a)\in \QQ_0$ and head $h(a)\in \QQ_0$. For $i\in \QQ_0$ define
$$T(i)=\{j\in \QQ_0| \exists a\in \QQ_1 \text{ with }t(a)=j, h(a)=i\},$$
$$H(i)=\{j\in \QQ_0| \exists a\in \QQ_1 \text{ with }h(a)=j, t(a)=i\}.$$

By a $\QQ$-bundle over a space $X$ we mean a collection of vector bundles $\E_i$, $i\in \QQ_0$.
A characteristic class of a $\QQ$-bundle is a polynomial in the Chern classes of the bundles $\E_1,\ldots,\E_n$.

Let $\M_i=\oplus_{j \in T(i)} \E_j$, and consider the virtual bundle $\M^*_i-\E^*_i$, where ${}^*$ means the dual bundle. Its Chern classes are defined by the formal expansion
$$\sum_{n=0}^\infty c_n(\M^*_i-\E^*_i)\xi^n = \frac{\sum_{n=0}^{\rk \M_i} c_n(\M_i) (-\xi)^n}{\sum_{n=0}^{\rk \E_i\ \ \  } c_n(\E_i) (-\xi)^n}=
\frac{\prod_{j\in T(i)} \left( \sum_{n=0}^{\rk \E_j} c_n(\E_j) (-\xi)^n \right)}
{\sum_{n=0}^{\rk \E_i} c_n(\E_i) (-\xi)^n}.$$

Let $\K_i=( c_n(\M^*_i-\E^*_i))_n$. We will be concerned with characteristic classes of a $\QQ$-bundle that can be written as polynomials in $\cup \K_i$. For example, suppose we have sets of variables
$$\A_1=\{u_{11},\ldots,u_{1r_1}\}, \ldots, \A_p=\{u_{p1},\ldots,u_{pr_p}\},$$
and numbers $i_1,\ldots, i_p\in \QQ_0$. Then for any polynomial (or Laurent polynomial) $f$ in the variables $u_{ks}$, the expressions
\begin{equation}\label{eqn:ir_ex}
\CC_{\A_1,\ldots,\A_p}^{\K_{i_1},\ldots,\K_{i_p}} (f), \qquad\qquad \DDelta_{\A_1,\ldots,\A_p}^{\K_{i_1},\ldots,\K_{i_p}} (f)
\end{equation}
are characteristic classes of the $\QQ$-bundle.

\section{Quiver representations, quiver polynomials}

\subsection{Quiver representations, and their orbits} \label{sec:orbits}

Recall that $\QQ=(\QQ_0,\QQ_1)$ is an oriented graph (the quiver), with the set $\QQ_0=\{1,2,\ldots,N\}$ of vertices, and a finite set $\QQ_1$ of arrows $a=(t(a),h(a))\in \QQ_0^2$. Fix a dimension vector $(e_1,\ldots,e_N)\in \N^N$, and vector spaces $E_i=\C^{e_i}$. We define the quiver representation of the group $\Gamma=\times_{i=1}^N GL(E_i)$ on the vector space $V=\oplus_{a\in \QQ_1} \Hom(E_{t(a)},E_{h(a)})$ by
$$\left( g_i \right)_{i \in \QQ_0} \cdot \left( \phi_a \right)_{a\in \QQ_1} = \left(  g_{h(a)} \circ \phi_a \circ g_{t(a)}^{-1}\right)_{a\in \QQ_1}.$$

Although some results of the paper are valid for more general quivers, we will restrict our attention to the Dynkin quivers, that is, to graphs $\QQ$ whose underlying unoriented graph is one of the simply-laced Dynkin graphs $A_n$, $D_n$, $E_6$, $E_7$, $E_8$. For these graphs the quiver representation has finitely many orbits \cite{bgp}---for any orientation and any dimension vector. Moreover, the orbits have an explicit description, as follows.
Consider the set $\Phi^+$ of positive roots, and the set $\{\alpha_i:i=1,\ldots,N\}$ of simple roots for the corresponding root system. For a positive root $\alpha$ define $d_i(\alpha)$ by $\alpha=\sum_{i=1}^N d_i(\alpha) \alpha_i$. The orbits of the quiver representation with Dynkin graph $\QQ$ and dimension vector $(e_1,\ldots,e_N)\in \N^N$ are in one to one correspondence with vectors
\begin{equation}\label{eqn:orbit_m}
(m_\alpha)\in \N^{\Phi^+},\qquad \text{for which}\qquad \sum_{\alpha \in \Phi^+} m_\alpha d_i(\alpha) = e_i \ \text{for all}\ i=1,\ldots,N.
\end{equation}
Note that the list of orbits does not depend on the orientation of $\QQ$. The orbit corresponding to the vector $m=(m_\alpha)\in \N^{\Phi^+}$ will be denoted by $\Omega_m$.

\subsection{Quiver polynomials} \label{sec:poly}

The main object of this paper is the $\Gamma$-equivariant cohomology class represented by the orbit closure $\overline{\Omega}_m$ in $V=\oplus_{a\in \QQ_1} \Hom(E_{t(a)},E_{h(a)})$ i.e.
$$[\overline{\Omega}_m]\in H^*_\Gamma(V).$$
There are several equivalent ways to define the equivariant class represented by an invariant subvariety in a representation, see for example \cite{kazass}, \cite{edidin_graham}, \cite{cr}, \cite[8.5]{miller-sturmfels}, \cite{fultonnotes}. Formally we follow the treatment in \cite{buch}, which defines the K-theory class of the subvariety, and defines the  cohomology class as a certain leading coefficient.

\medskip

The ring $H^*_\Gamma(V)$ is the ordinary cohomology ring $H^*(B_\Gamma V)$ of the so-called Borel-construction space $B_\Gamma V$. There is a natural vector bundle $B_\Gamma V \to B\Gamma$ with fiber $V$; where $B\Gamma$ is the classifying space of $\Gamma$. Hence $H^*_\Gamma(V)\cong H^*(B\Gamma)$, the ring of $\Gamma$-characteristic classes. Recall that $B\Gamma=BGL(E_1)\times \ldots \times BGL(E_n)$ and that over each $BGL(E_i)$ there is a natural tautological vector bundle $\E_i$ with fiber $E_i$. Consider the bundles $\E_i$ over $B\Gamma$. This way we obtained a $\QQ$-bundle over $B\Gamma$. Recall that $\M_i=\oplus_{j\in T(i)} \E_i$, and define
\begin{equation} \label{eqn:Ki}
\K_i=\left(c_n(\M^*_i-\E^*_i)\right)_n
\end{equation}
Our goal is to express the class $[\overline{\Omega}_m]$ in terms of the characteristic classes $\cup \K_i$.


\section{The main result} \label{sec:main_result}

\subsection{Resolution pair} \label{sec:resolution_pair}
First we follow Reineke \cite{reineke} to define {\sl resolution pairs}, see also \cite{buch}. For dimension vectors $e,f \in \N^N$ let $\langle e,f\rangle=\sum_{i=1}^N e_i f_i-\sum_{a\in \QQ_1} e_{t(a)}f_{h(a)}$ denote the Euler form for $\QQ$.
Identifying a positive root $\alpha$ with its dimension vector $d(\alpha)\in \N^N$ by $\alpha=\sum_{i=1}^N d_i(\alpha) \alpha_i$ (where $\alpha_i$ are the simple roots), we can extend the Euler form for positive roots.

Let $\Phi'\subset \Phi^+$ be a subset of the set of positive roots.  A partition
\begin{equation}\label{eqn:partition_pos_roots}
\Phi'=\I_1 \cup \I_2 \cup \ldots \cup \I_s
\end{equation}
is called directed if $\langle \alpha,\beta\rangle\geq 0$ for all $\alpha, \beta\in \I_j$ $(1\leq j \leq n)$, and $\langle\alpha,\beta\rangle\geq 0 \geq \langle\beta,\alpha\rangle$ for all $\alpha\in \I_i$, $\beta\in \I_j$ with $1\leq i<j \leq s$. A directed partition exists for any Dynkin quiver, see \cite{reineke}.

Let $m=(m_\alpha)\in\N^{\Phi^+}$ be a vector representing an orbit $\Omega_m\subset V$, let $\Phi'\subset \Phi^+$ be a subset containing $\{\alpha: m_\alpha \not= 0\}$, and let $\Phi'=\I_1\cup \ldots \cup \I_s$ be a directed partition. For each $j\in \{1,\ldots,s\}$ write
$$\sum_{\alpha \in \I_j}  m_\alpha \alpha =(p^{(j)}_1,\ldots,p^{(j)}_n)\in \N^N.$$
Let $\ii^{(j)}=(i_1,\ldots,i_l)$ be any sequence of the vertices $i\in \QQ_0$ for which $p^{(j)}_i\not= 0$, with no vertices repeated, and ordered so that the tail of any arrow of $\QQ$ comes before its head. Let $\rr^{(j)}=(p^{(j)}_{i_1},\ldots,p^{(j)}_{i_l})$. Finally, let $\ii$ and $\rr$ be the concatenated sequences $\ii=\ii^{(1)}\ii^{(2)}\ldots  \ii^{(s)}$ and $\rr=\rr^{(1)}\rr^{(2)}\ldots \rr^{(s)}$. The pair $\ii, \rr$ will be called a {\em resolution pair} for $\Omega_m$.

\subsection{Resolution variables, generating functions}

For the resolution pair
$$\ii=(i_1,\ldots,i_p),\ \qquad \rr=(r_1,\ldots, r_p)$$
we define the sets of variables $\A_k=\{u_{k1},\ldots,u_{kr_k}\}$ for $k=1,\ldots,p$. Set
\[
\B_k=\mathop{\bigcup_{l>k}}_{i_l=i_k} \A_l, \qquad\qquad
\C_k=\mathop{\bigcup_{l>k}}_{i_l\in H(i_k)} \A_l, \qquad\qquad
n_k=\mathop{\sum_{l>k}}_{i_l\in T(i_k)} r_l -  \mathop{\sum_{l>k}}_{i_l=i_k} r_l.
\]

Next we define various functions in the variables $\cup_k \A_k$. For $1\leq k \leq p$ define
\begin{itemize}
\item the monomial factors
$$M_k=\prod_{s=1}^{r_k}  u_{ks}^{n_k};$$
\item the discriminant factors
$$D_k= \prod_{1\leq i<j \leq r_k} \left(1-\frac{u_{ki}}{u_{kj}}\right);$$
\item and the interference factors
$$I_k= \prod_{s=1}^{r_k}
\frac{ \prod_{x\in \B_k } \left( 1- \frac{u_{ks}}{ x } \right) }{ \prod_{x\in \C_k } \left( 1- \frac{u_{ks}}{ x } \right) }.
$$
\end{itemize}

\begin{theorem} \label{thm:main}
Recall the definition of $\K_i$ from (\ref{eqn:Ki}). We have
\begin{equation} \label{eqn:main1}
[\overline{\Omega}_m]=\CC_{\A_1,\ldots,\A_p}^{\K_{i_1},\ldots,\K_{i_p}} \left( \prod_{k=1}^p M_k D_k I_k \right);
\end{equation}
\begin{equation}\label{eqn:main2}
[\overline{\Omega}_m]=\DDelta_{\A_1,\ldots,\A_p}^{\K_{i_1},\ldots,\K_{i_p}} \left( \prod_{k=1}^p {M}_k I_k \right).
\end{equation}
\end{theorem}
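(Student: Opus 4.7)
The strategy is to invoke Reineke's resolution of $\overline{\Omega}_m$ attached to the directed partition $\Phi'=\I_1\cup\ldots\cup\I_s$ and the resolution pair $(\ii,\rr)$, and then to evaluate the resulting equivariant pushforward as an iterated Grassmannian-bundle integral. The two formulas (\ref{eqn:main1}) and (\ref{eqn:main2}) are equivalent: applying Lemma \ref{lemma:C vs Delta} separately in each variable set $\A_k$ turns (\ref{eqn:main1}) into (\ref{eqn:main2}), because the discriminant appearing in that lemma is precisely $D_k$. Hence it suffices to establish (\ref{eqn:main1}).

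The first step is to construct, from the data $(\ii,\rr)$, a tower of $\Gamma$-equivariant Grassmannian bundles $\tilde Y=Y_p\to Y_{p-1}\to\ldots\to Y_1\to Y_0=B\Gamma$, where $Y_k\to Y_{k-1}$ parametrizes rank-$r_k$ subspaces in a bundle $\mathcal F_k$ built from $\E_{i_k}$ by quotienting out the rank-$r_l$ subbundles already chosen at earlier steps $l<k$ with $i_l=i_k$. Following Reineke, the natural $\Gamma$-equivariant map $\pi:\tilde Y\to V$ is proper and birational onto $\overline{\Omega}_m$, so that
\[
[\overline{\Omega}_m]=\pi_*(1)\in H^*_\Gamma(V).
\]
Let $\mathcal U_k$ denote the tautological rank-$r_k$ subbundle on $Y_k$, whose Chern roots I identify with $u_{k1},\ldots,u_{kr_k}\in\A_k$.

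The second step is to evaluate $\pi_*$ layer by layer, innermost first, via the Grassmannian-bundle Gysin formula written as a constant-term extraction in the $u_{k\bullet}$ variables against the Segre-type generating series of $\mathcal F_k$. Chasing the definitions of $\M_{i_k}$ and $\E_{i_k}$ together with the filtrations introduced by earlier and later tautological subbundles, the generating series of $\M^*_{i_k}-\E^*_{i_k}$ on $Y_{k-1}$ splits as a product of the generating function $\sum_n c_n(\M^*_{i_k}-\E^*_{i_k})u_{ks}^{-n}$ (which produces the $\K_{i_k}$ factor under Lemma \ref{lemma:C_vs_const}) and the interference factor $I_k$: its denominator records the later-picked subbundles at the head vertices of $i_k$ feeding into $\mathcal F_k$ through quiver arrows, while its numerator removes the later-picked subbundles at the vertex $i_k$ itself. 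The Vandermonde $D_k$ is the standard factor in any Grassmannian-bundle pushforward, and the monomial $M_k=\prod_s u_{ks}^{n_k}$ records the degree shift forced by the relative Euler class of the Grassmannian fibration, the exponent $n_k$ matching exactly the Euler-form count in its definition.

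The principal obstacle is precisely this bookkeeping: showing that the Chern-class generating series decomposes cleanly into the $M_kD_kI_k$ pattern as one contracts the tower, and that the ordering conditions guarantee every geometric series $1/(1-u_{ks}/x)$ with $x\in\C_k$ is expanded in the regime $|u_{ks}|\ll|x|$ required by Lemma \ref{lemma:C_vs_const}. This is where the \emph{directedness} of the partition and the tails-before-heads ordering inside each $\ii^{(j)}$ play a crucial role: they ensure that no spurious poles appear and that the constant-term extraction defining $\CC$ is well-defined. Assembling the layers then reproduces exactly the constant-term expression of Lemma \ref{lemma:C_vs_const} for $\CC_{\A_1,\ldots,\A_p}^{\K_{i_1},\ldots,\K_{i_p}}(\prod_k M_k D_k I_k)$, proving (\ref{eqn:main1}).
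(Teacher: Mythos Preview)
Your strategy---Reineke's resolution followed by an iterated Grassmannian-bundle Gysin computation unwound via Lemma~\ref{lemma:C_vs_const}---is exactly the route the paper takes (Section~\ref{sec:proof}). Two points in your outline need correction before the argument goes through.

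First, each step of Reineke's construction is \emph{not} a bare Grassmannian bundle. In the paper's notation the step $R_{i_k}^{r_k}$ is the composite of the Grassmannization $\pi_k:\Gr_{\,\rk\E_{i_k}-r_k}\E_{i_k}\to X_{k-1}$ with the inclusion $\iota_k$ of the zero scheme of the natural map $\M_{i_k}\to \E_{i_k}\to Q_k$, where $Q_k$ is the tautological rank-$r_k$ \emph{quotient}. Your tower of pure Grassmannians, without these incidence conditions, is not birational onto $\overline{\Omega}_m$. Correspondingly, the monomial $M_k$ is not a ``relative Euler class of the Grassmannian fibration'': it arises because $\iota_{k*}(1)=e(\Hom(\M_{i_k},Q_k))$, and Lemma~\ref{lem:kaz_gys} then converts $\pi_{k*}$ of this Euler class into the factor $\prod_s v_s^{\,\rk\M_{i_k}-\rk S_k}=\prod_s u_{ks}^{\,n_k}$. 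This two-step mechanism (zero-scheme Euler class, then Grassmannian Gysin) is the content of Proposition~\ref{prop:push_forward}; the interference factor $I_k$ appears there as the ratio $\tilde P_j/P_j$ tracking how $c(\M_j^*-\E_j^*)$ changes when $\E_{i_k}$ is replaced by $S_k$. Note also that the variables $u_{k1},\dots,u_{kr_k}$ are Chern roots of $Q_k$, not of a tautological subbundle.

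Second, the directedness of the partition plays no role in making the formal expansion $1/(1-x/y)=\sum_n(x/y)^n$ legitimate; that convention is fixed once and for all and the operations $\CC,\DDelta$ are purely formal. Directedness enters only geometrically, through Reineke's Theorem~\ref{thm:reineke}, to guarantee that the iterated construction really is a resolution of $\overline{\Omega}_m$.
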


The right hand sides of (\ref{eqn:main1}) and (\ref{eqn:main2}) are equal, because of Lemma \ref{lemma:C vs Delta}. We will prove (\ref{eqn:main2}) in Section \ref{sec:proof}.

\begin{remark} The key ingredient of the above expressions is $\prod_{k} I_k$. It can be rewritten as
\begin{equation}\label{eqn:COHAmult}
\prod_{k=1}^p I_k=  \frac{ \prod_{v\in Q_0} \prod_{k<l, i_k=i_l=v}\ \ \ \ \ \ \  \prod_{x\in \A_k, y\in \A_l} \left( 1 - \frac{x}{y} \right) }
{ \prod_{a\in Q_1} \prod_{k<l, i_k=t(a), i_l=h(a)} \prod_{x\in \A_k, y\in \A_l} \left( 1 - \frac{x}{y} \right) }.
\end{equation}
The resemblance of this formula with the one in Theorem 2 of \cite{coha} for the multiplication in the Cohomological Hall Algebra, is not accidental. The connection between COHA's and quiver polynomials is explained in \cite{rr_coha}.
\end{remark}

\begin{remark} \label{rem:nonumer}
Lemma \ref{lemma:C vs Delta} allows us to write (\ref{eqn:main2}) in another equivalent form, as follows. Let $\X_i$ be the concatenation of alphabets $\A_{j_1}\A_{j_2}\ldots\A_{j_s}$ with $j_1< j_2<\ldots <j_s$ and $i_{j_1}=\ldots=i_{j_s}=i$. Then
\begin{equation}\label{eqn:main3}
[\overline{\Omega}_m]=\DDelta_{\X_1,\ldots,\X_N}^{\K_{1},\ldots,\K_{N}} \left(
\frac{\prod_{k=1}^p {M}_k}
{\prod_{a\in Q_1} \prod_{k<l, i_k=t(a), i_l=h(a)} \prod_{x\in \A_k, y\in \A_l} \left( 1 - \frac{x}{y} \right) }
\right).
\end{equation}
\end{remark}

\subsection{An $A_3$ example}

Consider the quiver $1 \to 2 \leftarrow 3$. The positive roots of the root system $A_3$ are $\alpha_{ij}=\sum_{i\leq u\leq j}\alpha_u$ for $1\leq i\leq j \leq 3$. Consider the orbit corresponding to the linear combination $\sum_{1\leq i\leq j\leq 3} m_{ij}\alpha_{ij}$.
Let us choose the partition
$$\Phi^+=\{\alpha_{22}\} \cup \{\alpha_{12},\alpha_{23},\alpha_{13}\} \cup \{\alpha_{11},\alpha_{33}\}.$$
This choice induces the resolution pair $\ii=(2,1,3,2,1,3)$, $\rr=(m_{22},m_{12}+m_{13}, m_{23}+m_{13}, m_{12}+m_{13}+m_{23},m_{11}, m_{33})$.
Let $\A_i, i=1,\ldots,6$ be alphabets of cardinalities $\rr_i$ respectively. Theorem~\ref{thm:main} gives
\begin{equation*}
[\overline{\Omega}_m]=\DDelta_{\A_1,\ldots,\A_6}^{\K_{2},\K_{1},\K_{3},\K_{2},\K_{1},\K_{3}}
\left(
\A_1^{m_{13}+m_{11}+m_{33}}\A_2^{-m_{11}}\A_3^{-m_{33}}\A_4^{m_{11}+m_{33}}
\frac{ \left(1-\frac{\A_1}{\A_4}\right) \left(1-\frac{\A_2}{\A_5}\right)\left(1-\frac{\A_3}{\A_6}\right)}
{\left(1-\frac{\A_2}{\A_4}\right)\left(1-\frac{\A_3}{\A_4}\right)}
\right).
\end{equation*}
Here we used the shorthand notations $\X^m=\prod_{x\in \X} x^m$ and $1-\X/\Y=\prod_{x\in \X}\prod_{y\in \Y} (1-x/y)$. This formula can be rewritten in the form of Remark \ref{rem:nonumer} to obtain
\begin{equation*}
[\overline{\Omega}_m]=\DDelta_{(\A_1\A_4),(\A_2\A_5),(\A_3\A_6)}^{\K_{2},\K_{1},\K_{3}}
\left(
\frac{\A_1^{m_{13}+m_{11}+m_{33}}\A_2^{-m_{11}}\A_3^{-m_{33}}\A_4^{m_{11}+m_{33}}}
{\left(1-\frac{\A_2}{\A_4}\right)\left(1-\frac{\A_3}{\A_4}\right)}
\right).
\end{equation*}
It is tempting to call the functions in the arguments ``generating functions'': One only has to change the exponents $m_{ij}$ to obtain the polynomials $[\overline{\Omega}_m]$ for different $m$ values. Nevertheless, the lengths of the alphabets $\A_k$ also depend on $m_{ij}$, and the operations $\CC$, $\DDelta$ are rather delicate. Hence, one may call the arguments ``iterated residue generating functions''.

\section{Concrete calculations} \label{sec:examples}

\subsection{Examples for (\ref{eqn:main1})}

Consider the quiver $1 \to 2 \leftarrow 3$. The positive roots of the root system $A_3$ are $\alpha_{ij}=\sum_{i\leq u\leq j}\alpha_u$ for $1\leq i\leq j \leq 3$. Consider the orbit corresponding to the linear combination $\sum_{1\leq i\leq j\leq 3} m_{ij}\alpha_{ij}$. We will show an example with two different choices of (\ref{eqn:partition_pos_roots}), that is two different partitions of the set of positive roots.

First, we may choose the partition
$$\Phi^+=\{\alpha_{22}\} \cup \{\alpha_{12},\alpha_{23},\alpha_{13}\} \cup \{\alpha_{11},\alpha_{33}\}.$$
This choice induces the resolution pair $\ii=(2,1,3,2,1,3)$, $\rr=(m_{22},m_{12}+m_{13}, m_{23}+m_{13}, m_{12}+m_{13}+m_{23},m_{11}, m_{33})$.
For the special case $m_{13}=m_{12}=m_{22}=m_{33}=1$ and all other $m_{ij}=0$  we obtain the dimension vector $(2,3,2)$ and have the variables
$$\A_1=\{u_{11}\}, \A_2=\{u_{21}, u_{22}\}, \A_3=\{u_{31}\}, \A_4=\{u_{41},u_{42}\}, \A_5=\{\}, \A_6=\{u_{61}\}$$
which we rename $u, v_1, v_2, w, s_1, s_2, x$ respectively. We obtain that
\begin{equation}\label{eqn:G1}
G_1=\prod_{k=1}^6 M_kD_kI_k=\frac{u^2s_1s_2(1-\frac{v_1}{v_2})(1-\frac{s_1}{s_2})(1- \frac{u}{s_1})(1- \frac{u}{s_2})(1- \frac{w}{x})}
{w(1- \frac{v_1}{s_1})(1- \frac{v_2}{s_1})(1- \frac{v_1}{s_2})(1- \frac{v_2}{s_2})
(1- \frac{w}{s_1})(1- \frac{w}{s_2})}.
\end{equation}
Calculation shows that this function is equal to $-u^3+u^2s_2+u^2v_2+$fractions. Hence the corresponding quiver polynomial is
\begin{equation}\label{eqn:quiver_pol_example}
[\overline{\Omega}_m]=-c_3^{(2)}+ c_2^{(2)}c_1^{(2)} + c_2^{(2)} c_1^{(1)},
\end{equation}
where the upper index ${}^{(i)}$ refers to the bundle $\M_i^*-\E_i^*$.

Now consider the same orbit, that is, the same $m$, but with the different choice of partition
$$\Phi^+=\{\alpha_{22},\alpha_{12},\alpha_{23}\} \cup \{\alpha_{13},\alpha_{11},\alpha_{33}\}.$$
The resolution pair obtained is $\ii=(1,3,2,1,3,2)$, $\rr=(m_{12}, m_{23}, m_{12}+m_{22}+m_{23}, m_{11}+m_{13}, m_{13}+m_{33}, m_{13})$.
After renaming the variables
$$\A_1=\{u_{11}\}, \A_2=\{\}, \A_3=\{u_{31},u_{32}\}, \A_4=\{u_{41}\}, \A_5=\{u_{51},u_{52}\}, \A_6=\{u_{61}\}$$
to $u,w_1,w_2,s,t_1,t_2,x$ respectively, we obtain
\begin{equation} \label{eqn:G2}
G_2=\prod_{k=1}^6 M_kD_kI_k = \frac{w_1^2w_2^2(1-\frac{w_1}{w_2})( 1 - \frac{u}{s}) (1-\frac{t_1}{t_2}) ( 1 - \frac{w_1}{x})( 1 - \frac{w_2}{x})}
{u ( 1 - \frac{u}{w_1})( 1 - \frac{u}{w_2})( 1 - \frac{s}{x})( 1 - \frac{u}{x})( 1 - \frac{t_1}{x})( 1 - \frac{t_2}{x})}.
\end{equation}
The functions (\ref{eqn:G1}) and (\ref{eqn:G2}) look similar, but they are essentially different. Let us show how much simpler the latter one is for practical purposes. Observe that in (\ref{eqn:G2}) the $x$ variable only turns up in the factor
$$\left( 1 - \frac{w_1}{x}\right)\left( 1 - \frac{w_2}{x}\right) \prod_{y\in \{s,u,t_1,t_2\}}\sum_{i=0}^{\infty} \left(\frac{y}{x}\right)^i=1+\frac{1}{x}\left( \ldots \right).$$
This means that the integer part (the sum of the terms without denominators) of (\ref{eqn:G2}) is the same as the integer part of (\ref{eqn:G2}) without this factor. We will denote this relation by the sign $\sim_x$. We obtain
$$G_2 \sim_x
\frac{w_1^2w_2^2(1-\frac{w_1}{w_2})( 1 - \frac{u}{s}) (1-\frac{t_1}{t_2})}
{u ( 1 - \frac{u}{w_1})( 1 - \frac{u}{w_2})} \sim_s
\frac{w_1^2w_2^2(1-\frac{w_1}{w_2}) (1-\frac{t_1}{t_2})}
{u  ( 1 - \frac{u}{w_1})( 1 - \frac{u}{w_2})} \sim_t
\frac{w_1^2w_2^2(1-\frac{w_1}{w_2})}{u ( 1- \frac{u}{w_1})( 1- \frac{u}{w_2})}.$$
The last rational function clearly has integer part $w_1w_2^2+w_2^2u-w_1^3$, showing again that the induced quiver polynomial is (\ref{eqn:quiver_pol_example}).

\begin{remark}
The formulas of Theorem \ref{thm:main} will be proved using a geometric construction, which is a series of rational maps. Some of these maps will happen to be the identity maps. This geometric phenomenon is reflected in the fact that the function $G_2$ can be simplified, as above, by removing all the factors containing variables $u_{ks}$ for certain $k$'s.
\end{remark}

\subsection{Examples for (\ref{eqn:main2})}
Consider again the quiver $Q=1 \to 2 \leftarrow 3$, with dimension vector $e=(2,3,2)$, and the orbit determined by $m_{13}=m_{12}=m_{22}=m_{33}=1$ and all other $m_{ij}=0$. Choose the partition
\[\Phi^+=\{ \alpha_{22} ,\alpha_{12}, \alpha_{23} \} \cup \{\alpha_{13}, \alpha_{11}, \alpha_{33}\}.
\]
The resolution pair obtained is $\ii=(1,3,2,1,3,2)$,
$$\rr=(m_{12},m_{23},m_{12}+m_{22}+m_{23}, m_{11}+m_{13},m_{13}+m_{33},m_{13})=(1, 0, 2, 1, 2, 1).$$
We have the variables
$$\A_1=\{u_{11}\}, \A_2=\{\}, \A_3=\{u_{31},u_{32}\}, \A_4=\{u_{41}\}, \A_5=\{u_{51},u_{52}\}, \A_6=\{u_{61}\},$$
which we rename to $u,w_1,w_2,s,t_1,t_2,x$ respectively. We have
$$
\begin{aligned}
\prod_{k=1}^6 M_kI_k & =\frac{ w_1^2w_2^2 }{u ( 1-\frac{u}{w_1} )( 1-\frac{u}{w_2} )} =w_1^2w_2+w_1w_2^2+\\ & u\left(w_1^2+w_1w_2+w_2^2\right)+u^2\left(\frac{w_2^2}{w_1}+w_1+w_2\right)+u^3\left(\frac{w_2}{w_1}+1\right)+u^4\left(\frac{1}{w_1}\right)+\ldots,
\end{aligned}
$$
where for the rest of the terms $u^aw_1^bw_2^c$ at least one of $a<0$, $b<-1$, or $c<0$ holds---hence, their $\DDelta$ is 0. From (\ref{eqn:main2}) of Theorem \ref{thm:main} we obtain
$$[\overline{\Omega}_m]=s^{(2)}_{2,1}+s^{(2)}_{1,2}+ s^{(1)}_1 \left(s^{(2)}_{2,0}+s^{(2)}_{1,1}+s^{(2)}_{0,2}\right) +
s^{(1)}_{2}\left( s^{(2)}_{-1,2}+s^{(2)}_{0,1}+s^{(2)}_{1,0}\right) +
s^{(1)}_{3}\left( s^{(2)}_{-1,1}+s^{(2)}_{0,0}\right)
s^{(1)}_{4}\left( s^{(2)}_{-1,0}\right).$$
Observing that $s^{(i)}_{a,a+1}=0$ for any $a$, as well as $s^{(i)}_{a,b}+s^{(i)}_{b-1,a+1}=0$ for any $a,b$, we obtain
$$[\overline{\Omega}_m]=s^{(2)}_{2,1}  +  s^{(1)}_1 s^{(2)}_{2,0},$$
which is, again, equal to (\ref{eqn:quiver_pol_example}).

\section{Proof of Theorem \ref{thm:main}} \label{sec:proof}

{\sl Notation.} Let $\E^e \to X$ be a vector bundle over $X$ (the upper index is optional, it shows the rank of the bundle). The pull-back of this bundle along a map $\rho:Y\to X$ is denoted by $\rho^*\E \to Y$, or simply by $\E \to Y$. The class $c_n(\E)$ may mean the $n$th Chern class of $\E\to X$ or the $n$th Chern class of $\E\to Y$, depending on context.

\subsection{Gysin map in iterated residue form}

The usual description of Gysin maps of Grassmannian (or flag) bundles is either via equivariant localization, or divided difference operations, or the combinatorics of partitions. The idea of studying these Gysin maps in iterated residue form is due to B\'erczi and Szenes \cite[Prop.6.4]{bsz}.
The formulas in their natural generality are developed by Kazarian \cite{kaza:gysin}. The following lemma is a special case of \cite{kaza:gysin}. However, this particular statement is easily seen to be equivalent to the usual combinatorial description of the Gysin map $\pi_*$, see e.g. \cite[Prop. in Sect. 4.1]{fulton_pragacz}.

\begin{lemma}\label{lem:kaz_gys}
Let $\E_1^{e_1}$ and $\E_2^{e_2}$ be vector bundles over $X$, and consider the Grassmannization $\pi:\Gr_{e_2-q}\E_2\to X$ of the bundle $\E_2$ (its fiber is $\Gr_{e_2-q}\C^{e_2}$, the Grassmannian of $e_2-q$-dimensional linear subspaces of $\C^{e_2}$). Let $S^{e_2-q}\to \E_2\to Q^q$ be the tautological exact sequence of bundles over $\Gr_{e_2-q}\E_2$. Let $\omega_1,\ldots,\omega_q$ be the Chern roots of $Q$, and let $\epsilon_1, \ldots, \epsilon_{e_1}$ be the Chern roots of $\E_1$. Let $f$ be a symmetric polynomial in $q$ variables. Then
\[
\pi_*\left( f(\omega_1,\ldots,\omega_q)\prod_{i=1}^q \prod_{j=1}^{e_1} (\omega_i-\epsilon_j) \right)=
\DDelta_{\{v_1,\ldots,v_q\}}^{c_n(\E_1^*-\E_2^*)}\left( f(v_1,\ldots,v_q) \prod_{j=1}^q v_j^{e_1-e_2+q}\right).
\]
\end{lemma}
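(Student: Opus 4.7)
The plan is to recognize the RHS as the iterated-residue-at-infinity form of the standard equivariant localization formula for $\pi_*$, and to match the two via the global residue theorem. By $\Z$-linearity of both sides in $f$, I may assume $f=s_\lambda$ is a Schur polynomial. First I would unpack the RHS using Lemmas~\ref{lemma:C vs Delta} and~\ref{lemma:C_vs_const}: writing the Chern roots of $\E_1,\E_2$ as $\epsilon_l,\eta_i$, the generating function of $\K=(c_n(\E_1^*-\E_2^*))_n$ factors as
\[
\sum_{n\ge 0}c_n v^{-n}\;=\;v^{e_2-e_1}\,\frac{\prod_{l=1}^{e_1}(v-\epsilon_l)}{\prod_{i=1}^{e_2}(v-\eta_i)},
\]
and after substituting this and absorbing $\prod_k v_k^{e_1-e_2+q}$, the RHS becomes the constant term in each $v_k$ of
\[
R(v_1,\ldots,v_q)\;:=\;f(v)\,\prod_{k=1}^q v_k^{q}\,\prod_{i<j}\!\left(1-\tfrac{v_i}{v_j}\right)\,\prod_{k=1}^q \frac{\prod_{l=1}^{e_1}(v_k-\epsilon_l)}{\prod_{i=1}^{e_2}(v_k-\eta_i)}.
\]

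Second, I would unpack the LHS by the splitting principle and equivariant localization on $\pi:\Gr_{e_2-q}\E_2\to X$. At the fiber torus fixed point indexed by a $q$-subset $J\subset\{1,\ldots,e_2\}$, the Chern roots $\omega_k$ restrict to $\{\eta_j:j\in J\}$ with normal Euler class $\prod_{j\in J,\,i\notin J}(\eta_j-\eta_i)$, so the LHS equals
\[
\sum_{|J|=q}\frac{f(\eta_J)\,\prod_{j\in J}\prod_{l=1}^{e_1}(\eta_j-\epsilon_l)}{\prod_{j\in J,\,i\notin J}(\eta_j-\eta_i)}.
\]

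Third, I would identify this subset sum with the sum of iterated residues of $R$ at the finite poles $v_k=\eta_{j_k}$ (over ordered tuples of distinct indices, divided by $q!$). The Vandermonde factor $\prod_{i<j}(1-v_i/v_j)$ supplies the antisymmetry that converts ordered tuples into subsets, and the factor $v_k^q$ cancels the would-be pole at $v_k=0$. Applying the global residue theorem in each $v_k$ in turn equates the sum of finite residues with the residue at infinity, which is the constant term computed in step one.

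\textbf{The main obstacle} is the bookkeeping in the previous paragraph: verifying that the exponent $e_1-e_2+q$ is calibrated precisely so that $R$ has poles only at $v_k=\eta_i$ and at $v_k=\infty$ (not at $v_k=0$), and that the signs arising from the Vandermonde and from the $q!$ symmetrization over subsets match those produced by equivariant localization. Once this matching is set up, the identity reduces to the classical combinatorial Gysin formula on a Grassmann bundle \cite[Sect.~4.1]{fulton_pragacz}, recovered via Jacobi--Trudi for $f=s_\lambda$.
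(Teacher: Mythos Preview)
Your outline is sound, and the strategy you sketch---expand the $\DDelta$-side via Lemmas~\ref{lemma:C vs Delta} and~\ref{lemma:C_vs_const}, interpret the constant term as an iterated residue at infinity, expand the Gysin side by Atiyah--Bott localization on the fibre, and match the two through the global residue theorem variable by variable---is exactly the B\'erczi--Szenes mechanism \cite{bsz}.  Your observation that $\prod_k v_k^{q}\prod_{i<j}(1-v_i/v_j)=\prod_k v_k^{q-k+1}\prod_{i<j}(v_j-v_i)$ carries positive $v$-exponents, hence no pole at $0$, is correct; and the vanishing of the Vandermonde on coinciding indices does kill the non-distinct tuples in the residue sum, leaving the subset sum from localization.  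The remaining sign/normalization check (each $\mathrm{Res}_{v_k=\infty}=-\sum_{\text{finite}}\mathrm{Res}$, and constant term versus residue of $\cdot/v$) is genuine but routine.

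This is, however, a different route from the paper's.  The paper gives no argument of its own: it simply records that the lemma is a special case of Kazarian's Gysin calculus \cite{kaza:gysin} and is ``easily seen to be equivalent'' to the classical determinantal Gysin formula for Grassmann bundles \cite[Sect.~4.1]{fulton_pragacz}.  Concretely, that formula says $\pi_*\bigl(\Delta_\mu(Q)\bigr)=\Delta_{\mu_1-(e_2-q),\ldots,\mu_q-(e_2-q)}\bigl(c(-\E_2^*)\bigr)$; expanding $f\cdot\prod(\omega_i-\epsilon_j)$ in Schur polynomials of $Q$ (the $\epsilon$'s entering only through the coefficients), applying this identity termwise, and repackaging via Jacobi--Trudi yields the $\DDelta$-expression directly.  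Your localization/residue proof is more self-contained and explains \emph{why} the iterated-residue formalism encodes $\pi_*$; the paper's route is shorter but outsources the content to \cite{kaza:gysin} and \cite{fulton_pragacz}.
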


\subsection{One step resolution in iterated residue form}

Recall that $\QQ=(\QQ_0,\QQ_1)$ is an oriented graph. Consider a $\QQ$-bundle over the space $X$, by which we now mean not only a collection of vector bundles $\E_j^{e_j}$ for $j\in \QQ_0$, but also a vector bundle map $\phi_a:\E_{t(a)} \to \E_{h(a)}$ for every $a\in \QQ_1$.

Let us choose $i\in \QQ_0$ and $q\in \{0,1,\ldots,e_i\}$. The numbers $i$ and $q$ will be fixed for the rest of the section. Following Reineke \cite{reineke} (see also \cite{porteous}, \cite{buch}) we will describe a construction
$$(\E_i, \phi_a)\to X \qquad \qquad \xrightarrow{\hspace*{.5cm} R_i^q \hspace*{.5cm}} \qquad\qquad (\tilde{\E}_i, \tilde{\phi}_a)\to \tilde{X}.$$

Let $\pi:\Gr_{e_i-q}\E_i \to X$ be the Grassmanization of the bundle $\E_i$. Over this space there is the tautological exact sequence $0\to S \to \E_i \to Q \to 0$. Recall that $\M_i=\oplus_{j\in T(i)}\E_j$. Consider the zero scheme $\tilde{X}$ of the composition map $\M_i\to\E_i\to Q$, and its embedding $\iota:\tilde{X} \to \Gr_{e_i-q}\E_i$. The $\QQ$-bundle $(\E_i,\phi_a)$ over $X$ induces a $\QQ$-bundle $(\tilde{\E}_i,\tilde{\phi}_a)$ over $\tilde{X}$ as follows:
\begin{itemize}
\item $\tilde{\E_j}=\E_j$ if $j\not= i$, and $\tilde{\E_i}=S$;
\item $\tilde{\phi}_a=\phi_a$ if $h(a)\not= i$, $t(a)\not= i$;
\item $\tilde{\phi}_a$ is the composition $S \to \E_{i} \xrightarrow{\phi_a} \E_{h(a)}$ if $t(a)=i$;
\item $\tilde{\phi}_a: \E_j \to S$ is the unique lifting of $\phi_a: \E_j\to \E_i$ over $\tilde{X}$, if $h(a)=i$.
\end{itemize}
Let $\rho=\pi \circ \iota: \tilde{X} \to X$.

\smallskip

Our goal now is a formula for the Gysin map $\rho_*$.
Consider $p$ sets of variables
$$\A_1=\{u_{11},\ldots,u_{1r_1}\}, \A_2=\{u_{21},\ldots,u_{2r_2}\}, \ldots, \A_p=\{u_{p1},\ldots,u_{pr_p}\},$$
and the numbers $i_1,i_2,\ldots,i_p \in \QQ_0$. Let $\K_j=(c_n(\M_j^*-\E_j^*))$.
Let $\LL_j$ be obtained from $\K_j$ by replacing $\E_i$ with $S$. (In particular, if $j\not= i$, $j\not\in H(i)$ then $\LL_j=\K_j$.)
Let $\A_0=\{v_{1},\ldots, v_{q} \}$ and $i_0=i$.

\begin{proposition} \label{prop:push_forward}
For the Gysin map $\rho_*: H^*(\tilde{X}) \to H^*(X)$ we have
\[
\rho_*\left(
\DDelta_{\A_1,\ldots,\A_p}^{\K_{i_1},\ldots,\K_{i_p}}\left( f(u_{ks})  \right)\right) =\ \hskip 13 true cm
\]
\[\ \hskip 2 true cm \DDelta_{\A_0,\A_1,\ldots,\A_p}^{\LL_{i_0},\LL_{i_1},\ldots,\LL_{i_p}}\left( f(u_{ks})
\prod_{y \in \A_0} v_j^{\rk \M_i-\rk S}\cdot
\prod_{v \in \A_0} \frac{   \prod_{i_k=i} \hskip .5 true cm\prod_{x \in \A_k} \left(1- \frac{v}{x}\right)}
{   \prod_{i_k\in H(i)} \prod_{x\in \A_k} \left(1- \frac{v}{x}\right)}
  \right).
\]
\end{proposition}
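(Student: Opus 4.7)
The plan is to factor $\rho_*=\pi_*\iota_*$, handle $\iota_*$ through the natural description of $\tilde X$ as a zero scheme, and then apply Kazarian's Gysin formula (Lemma \ref{lem:kaz_gys}) to the Grassmann-bundle projection $\pi$. The embedding $\iota:\tilde X\hookrightarrow \Gr_{e_i-q}\E_i$ is cut out as the zero locus of the canonical section of $\Hom(\M_i,Q)=\bigoplus_{j\in T(i)}\E_j^*\otimes Q$ induced by $\M_i\to\E_i\to Q$. Assuming transversality (or, more robustly, working in the setting of \cite{buch}), the projection formula gives
\[
\iota_*\iota^*\alpha=\alpha\cup e(\Hom(\M_i,Q))=\alpha\cup\prod_{l=1}^q\prod_{k=1}^{\rk\M_i}(\omega_l-\mu_k),
\]
where $\omega_1,\ldots,\omega_q$ are the Chern roots of $Q$ and $\mu_1,\ldots,\mu_{\rk\M_i}$ are the Chern roots of $\M_i$.

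Next I would put $\DDelta^{\K}(f)$ in iterated-residue (constant-term) form using Lemmas \ref{lemma:C vs Delta} and \ref{lemma:C_vs_const}. Each $\K_j$-slot contributes a generating function $G_j(u)=\sum_n c_n(\M_j^*-\E_j^*)/u^n=\prod_\mu(1-\mu/u)/\prod_\epsilon(1-\epsilon/u)$ with $\mu$ ranging over Chern roots of $\M_j$ and $\epsilon$ over Chern roots of $\E_j$. The tautological sequence $0\to S\to\E_i\to Q\to 0$ on $\Gr_{e_i-q}\E_i$ gives $c(\E_i^*)=c(S^*)c(Q^*)$, which translates into the generating-function identities $G_i(u)=G^{\LL}_i(u)/\prod_l(1-\omega_l/u)$ and $G_j(u)=G^{\LL}_j(u)\prod_l(1-\omega_l/u)$ for $j\in H(i)$, while $G_j=G^{\LL}_j$ otherwise. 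Substituting these rewrites the integrand of $\DDelta^{\K}$ as an $\LL$-integrand times an explicit rational factor $R(\omega;u)$ of the $\omega_l$'s and the $u_{ks}$'s, of exactly the shape of the proposition's interference ratio.

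The final step applies $\pi_*$ via Kazarian's lemma. The Euler class from step one multiplied by the $\omega$-dependent factors from step two gives an integrand of the shape $h(\omega_1,\ldots,\omega_q)\prod_{l,k}(\omega_l-\mu_k)$ with $h$ symmetric in the $\omega_l$, which is precisely the hypothesis of Lemma \ref{lem:kaz_gys} applied to $\E_1=\M_i$ and $\E_2=\E_i$. The lemma replaces the $\omega_l$'s by fresh integration variables $v_1,\ldots,v_q\in\A_0$, produces the monomial $\prod_j v_j^{\rk\M_i-\rk\E_i+q}=\prod_j v_j^{\rk\M_i-\rk S}$, and adjoins a new $\DDelta$-slot indexed by $\A_0$; a final $\K_i\leftrightarrow\LL_i$ identification on this slot (using the same tautological sequence, now with $\A_0$ playing the role of the Chern roots of $Q$) produces exactly the right-hand side of the proposition. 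The hardest part is this concluding bookkeeping: verifying that the three interacting $\omega$-contributions — the correction $R$, the Euler class of $\iota_*$, and the $\K_i\leftrightarrow\LL_i$ conversion on the $\A_0$-slot — combine cleanly into the stated monomial weight and interference factor with no residual $v$-dependence.
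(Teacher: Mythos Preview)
Your proposal is correct and follows essentially the same route as the paper: factor $\rho_*=\pi_*\iota_*$, use the projection formula $\iota_*\iota^*(h)=h\cdot e(\Hom(\M_i,Q))$ for the zero-scheme inclusion, rewrite the $\DDelta$-expression in constant-term form via Lemmas~\ref{lemma:C vs Delta} and~\ref{lemma:C_vs_const}, compare the generating functions $P_j$ and $\tilde P_j$ through the tautological sequence $0\to S\to\E_i\to Q\to 0$, and finish with Lemma~\ref{lem:kaz_gys}. The only cosmetic difference is the direction in which you run the $\K\leftrightarrow\LL$ conversion (the paper converts $\tilde P_j\to P_j$, you convert $G_j\to G_j^{\LL}$), which is why you end up needing the extra bookkeeping step on the $\A_0$-slot that the paper absorbs directly.
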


\begin{proof}
Consider
$$g(u_{ks})=f(u_{ks}) \prod_{k=1}^p   \prod_{1\leq l<j \leq r_k} \left( 1-\frac{u_{kl}}{u_{kj}}\right) .$$
Using Lemmas \ref{lemma:C vs Delta} and \ref{lemma:C_vs_const} we can write
\begin{equation}\label{eqn:rand}
z=\DDelta_{\A_1,\ldots,\A_p}^{\K_{i_1},\ldots,\K_{i_p}}\left( f(u_{ks})  \right)=
\left\{
g(u_{ks}) \cdot
\prod_{j=1}^N
\underbrace{\prod_{i_k=j} \prod_{x\in \A_k}  \sum_{n=0}^\infty \frac{c_n(\tilde{\M}_j^*-\tilde{\E}_j^*)}{x^n}}_{\tilde{P}_j}
\right\}_{u_{ks}}
\end{equation}
where  $\{...\}_{u_{ks}}$ means the constant term in the $u_{ks}$ variables. Let
$$P_j=\prod_{i_k=j} \prod_{x\in \A_k}  \sum_{n=0}^\infty \frac{c_n({\M}_j^*-{\E}_j^*)}{x^n} \qquad
\tilde{P_j}=\prod_{i_k=j} \prod_{x\in \A_k}  \sum_{n=0}^\infty \frac{c_n(\tilde{\M}_j^*-\tilde{\E}_j^*)}{x^n}.$$
The appearance of the latter we already indicated in (\ref{eqn:rand}).

We will study the relation between $P_j$ and $\tilde{P}_j$ for various $j$'s. Along the way we will use the formal identity
\[
\sum_{n=0}^\infty \frac{ c_n(\E^*-\F^*) }{x^n} = \frac{  \prod_l (1-\frac{\epsilon_l}{x}) }{  \prod_l (1-\frac{\phi_l}{x}) },
\]
for the Chern roots $\epsilon_l$ (resp. $\phi_l$) of the bundle $\E$ (resp. $\F$).
\begin{itemize}
\item If $j\not=i$, $j\not\in H(i)$ then obviously $\tilde{P}_j=P_j$.
\item Let the Chern roots of $\tilde{\M}_i=\M_i$ be $\mu_l$, let the Chern roots of $\tilde{\E}_i=S$ be $\sigma_l$, let the Chern roots of $\E_i$ be $\beta_l$, and let the Chern roots of $Q$ be $\omega_l$. Then
    $$\tilde{P}_i=\prod_{i_k=i} \prod_{x\in \A_k}  \frac{  \prod_l \left( 1- \frac{\mu_l}{x} \right) }{  \prod_l \left( 1- \frac{\sigma_l}{x} \right) }=\prod_{i_k=i} \prod_{x\in \A_k} \frac{  \prod_l \left( 1- \frac{\mu_l}{x} \right) \prod_l \left( 1- \frac{\omega_l}{x} \right)}{  \prod_l \left( 1- \frac{\beta_l}{x} \right) }=
    P_i\prod_{i_k=i} \prod_{x\in \A_k}\prod_l \left( 1- \frac{\omega_l}{x} \right).$$
\item Let $j\in H(i)$. Let the Chern roots of $\tilde{M}_j$ be $\nu_l$, the Chern roots of $\E_j$ be $\kappa_l$, and the Chern roots of $\M_j$ be $\gamma_l$. We have $$\tilde{P}_j=\prod_{i_k=j} \prod_{x\in \A_k}  \frac{  \prod_l \left( 1- \frac{\nu_l}{x} \right) }{  \prod_l \left( 1- \frac{\kappa_l}{x} \right) }=\prod_{i_k=j} \prod_{x\in \A_k} \frac{  \prod_l \left( 1- \frac{\gamma_l}{x} \right) }{  \prod_l \left( 1- \frac{\kappa_l}{x} \right) \prod_l \left( 1- \frac{\omega_l}{x} \right) }=
    P_j\prod_{i_k=j} \prod_{x\in \A_k}\prod_l \frac{1}{\left( 1- \frac{\omega_l}{x} \right)}.$$
\end{itemize}

Thus (\ref{eqn:rand}) can be written as
\begin{equation} \label{eqn:tt1}
z=\left\{ g(u_{ks})
\frac{\prod_{i_k=i} \prod_{x\in \A_k} \prod_l \left(1- \frac{\omega_l}{x}\right)}{\prod_{i_k\in H(i)} \prod_{x\in \A_k} \prod_l \left(1- \frac{\omega_l}{x}\right)} \prod_{j=1}^N P_j
\right\}_{u_{ks}}.
\end{equation}
This class is an expression in Chern roots of bundles, all of which pull back via $\iota$. Hence, the adjunction formula
$\iota_*(\iota^*(h))=h\cdot \iota_*(1)$ of the Gysin map implies that $\iota_*(z)$ is the same expression as (\ref{eqn:tt1}) times
$$\iota_*(1)=e(\Hom(\M_i,Q))=\prod_{l=1}^{q}\prod_{j=1}^{\rk \M_i} (\omega_l-\mu_j).$$
We can apply Lemma \ref{lem:kaz_gys} to obtain $\rho_*(z)=\pi_*\iota_*(z)=$
\[
\DDelta_{\A_0}^{\K_{i_0}}\left(
\prod_{j=1}^q v_j^{\rk \M_i -\rk S}
 \left\{
g(u_{ks})
\frac{\prod_{i_k=i} \prod_{x\in \A_k} \prod_l \left(1- \frac{v_l}{x}\right)}{\prod_{i_k\in H(i)} \prod_{x\in \A_k} \prod_l \left(1- \frac{v_l}{x}\right)} \prod_{j=1}^N P_j
\right\}_{u_{ks}}
\right).
\]
This is further equal to
\[
\left\{
\prod_{j=1}^q v_j^{\rk \M_i -\rk S+1-j}
\prod_{1\leq l < j \leq q} (v_j-v_l) \cdot g(u_{ks})
\frac{\prod_{i_k=i} \prod_{x\in \A_k} \prod_l \left(1- \frac{v_l}{x}\right)}{\prod_{i_k\in H(i)} \prod_{x\in \A_k} \prod_l \left(1- \frac{v_l}{x}\right)} \prod_{j=1}^N P_j
\right\}_{u_{ks},v_j}
\]
which is---using Lemmas \ref{lemma:C vs Delta} and \ref{lemma:C_vs_const}---equal to
\[
\DDelta_{\A_0,\A_1,\ldots,\A_p}^{\LL_{i_0},\LL_{i_1},\ldots,\LL_{i_p}}\left( f(u_{ks})
\prod_{y \in \A_0} v_j^{\rk \M_i-\rk S}\cdot
\prod_{v \in \A_0} \frac{   \prod_{i_k=i} \hskip .5 true cm\prod_{x \in \A_k} \left(1- \frac{v}{x}\right)}
{   \prod_{i_k\in H(i)} \prod_{x\in \A_k} \left(1- \frac{v}{x}\right)}
  \right).
\]
This concludes the proof.
\end{proof}

\subsection{Iterated application of Proposition \ref{prop:push_forward}}

Now we are ready to prove Theorem \ref{thm:main}.
Let $\QQ$ be a quiver of Dynkin type, and let $\Omega_m$ be an orbit of a quiver representation. Let $\ii,\rr$ be a resolution pair for $\Omega_m$.

Let $(\E_i, \phi_a)\to X$ be a $\QQ$-bundle over the space $X$. Define the degeneracy locus
$$\Omega(\E_i,\phi_i) = \{x\in X: (E_i,\phi_a)_x\in \overline{\Omega}_m\}.$$

\begin{theorem} (Reineke \cite[Thm. 2.2]{reineke}) \label{thm:reineke}
Let $\tilde{X}$ be the base space of the bundle obtained by applying $R_{i_1}^{r_1}$, then $R_{i_1}^{r_1}$, etc, up to $R_{i_p}^{r_p}$ to the $\QQ$-bundle $(\E_i, \phi_a)\to X$. The map
$$\rho_{i_1}^{r_1} \circ \rho_{i_2}^{r_2} \circ \ldots \circ \rho_{i_p}^{r_p} : \tilde{X} \to X$$
is a resolution of $\overline{\Omega}(\E_i,\phi_i)$.
\end{theorem}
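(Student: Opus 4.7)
The plan is to induct on the number $s$ of blocks in the directed partition $\Phi'=\I_1\cup\ldots\cup\I_s$ of the support of $(m_\alpha)$. Because the resolution pair $(\ii,\rr)$ is itself assembled block-by-block---the subsequence of $\ii$ coming from $\I_j$ lists the vertices at which $\sum_{\alpha\in\I_j}m_\alpha\alpha$ is nonzero, with matching entries of $\rr$---the iterated construction naturally splits into $s$ stages. This reduces the statement to two claims: (i) each single stage, applied inside one block, cuts out as a smooth subvariety of a tower of Grassmannian bundles the locus parameterizing a subrepresentation of dimension vector $\sum_{\alpha\in\I_j}m_\alpha\alpha$; and (ii) the composite of all $s$ stages is a proper birational map onto $\overline{\Omega}(\E_i,\phi_a)$.

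For (i), within a single stage the vertex-by-vertex construction performs, at each relevant $i$, the Grassmannization $\Gr_{e_i-q}\E_i\to X$ together with the zero-scheme of the composition $\M_i\to\E_i\to Q$. Fiberwise this is precisely the data of a subspace $S_i\subset\E_i$ at each vertex that is compatible with all incoming arrows, i.e.\ a subrepresentation. The within-block ordering (tails of arrows before heads) guarantees that when the zero-scheme at vertex $i$ is imposed, all $\E_j$ for $j\in T(i)$ still appear in their original form, so the constraint is well-posed. The Hom-positivity $\langle\alpha,\beta\rangle\geq 0$ within $\I_j$, combined with Reineke's Euler-form count, forces the zero-scheme at each step to meet the ambient Grassmannian in the expected codimension, hence smoothly; the same input gives generic uniqueness of the subrepresentation, so each stage is birational onto its image.

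For (ii), the across-block condition $\langle\alpha,\beta\rangle\geq 0\geq\langle\beta,\alpha\rangle$ for $\alpha\in\I_i,\beta\in\I_j$ with $i<j$ enforces Ext-orthogonality between the $\I_i$- and $\I_j$-isotypic components of any representation in $\overline{\Omega}_m$, which rigidifies the module into a canonical filtration $0\subset U_s\subset\ldots\subset U_1\subset V$ whose subquotients are the block-isotypic pieces. The iteration constructs exactly this filtration, stage by stage, so combined with (i) the composite map is birational onto $\overline{\Omega}(\E_i,\phi_a)$; properness is automatic from the Grassmannian bundles and closed immersions involved, and $\tilde X$ is smooth by the stagewise smoothness. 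The main obstacle is precisely the transversality/smoothness estimate in each stage---it is where the ``directed'' hypothesis is genuinely needed---and rests on the Euler-form identity $\langle\sum_\alpha m_\alpha\alpha,\sum_\alpha m_\alpha\alpha\rangle=\sum_k\langle p^{(k)},p^{(k)}\rangle+\sum_{i<j}\langle p^{(i)},p^{(j)}\rangle$ together with the sign conditions provided by the directed partition.
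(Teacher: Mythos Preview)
The paper does not prove this theorem at all: it is quoted verbatim from Reineke \cite[Thm.~2.2]{reineke} and used as a black box in the next paragraph to derive Theorem~\ref{thm:main}. There is therefore no ``paper's own proof'' to compare against.

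Your sketch does follow the architecture of Reineke's original argument---induction along the directed partition, each stage building one layer of a canonical filtration, with the Euler-form sign conditions controlling both smoothness (transversality of the zero-scheme) and birationality (generic uniqueness of the subrepresentation). One point to be careful with: you write that ``when the zero-scheme at vertex $i$ is imposed, all $\E_j$ for $j\in T(i)$ still appear in their original form,'' but the within-block ordering puts tails \emph{before} heads, so by the time vertex $i$ is processed the bundles $\E_j$ for $j\in T(i)$ have already been replaced by their tautological subbundles. This is in fact what one wants---the resulting condition is exactly that the chosen subspaces form a subrepresentation---but your phrasing suggests the opposite. If you intend to include a proof rather than a citation, that step and the precise dimension count (matching the codimension of the zero-scheme against the rank of $\Hom(\M_i,Q)$) are where the care is needed.
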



We can apply Theorem \ref{thm:reineke} to maps between the universal vector bundles described in Section~\ref{sec:poly}, and we get
\[
[\overline{\Omega}_m]=\left( \rho_{i_1}^{r_1} \circ \rho_{i_2}^{r_2} \circ \ldots \circ \rho_{i_p}^{r_p} \right)_* (1)
= {\rho_{i_1}^{r_1}}_* \left( {\rho_{i_2}^{r_2}}_* \left( \ldots \left( {\rho_{i_p}^{r_p}}_* \left( \DDelta_{\emptyset}^{\emptyset}(1) \right)\right)\ldots\right)\right).
\]
If we apply the statement of Theorem \ref{prop:push_forward} at each application of $\rho_*$, we get the statement of Theorem \ref{thm:main}.

\section{On Buch's conjecture}

For an alphabet $\X=(x_1,\ldots,x_r)$ and a sequence of integers $\lambda \in \Z^r$ let $\X^\lambda$ be $\prod_{x_i \in \X} x_i^{\lambda(i)}$. We can reformulate Buch's conjecture on the positivity of cohomological quiver coefficients, in the terminology of the present paper.

\begin{conjecture} (Buch \cite{buch}) For Dynkin quivers we have
\begin{equation}\label{eqn:buchform}
[\overline{\Omega}_m]=\DDelta_{\X_1,\ldots,\X_N}^{\K_1,\ldots,\K_N}\left(
f(\X_1,\ldots,\X_N)
\right),
\end{equation}
where
$$f=\sum c_{(\lambda_1,\ldots,\lambda_N)} \X^{\lambda_1}\cdots \X^{\lambda_N}$$
with all $\lambda_i$ being partitions (i.e. weakly decreasing sequences) and all $c_{(\lambda_1,\ldots,\lambda_N)} \geq 0$.
\end{conjecture}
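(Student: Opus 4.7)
\emph{Strategy.} The main result of the paper, specifically equation (\ref{eqn:main3}) of Remark \ref{rem:nonumer}, already writes $[\overline{\Omega}_m]$ as $\DDelta^{\K_1,\ldots,\K_N}_{\X_1,\ldots,\X_N}(G)$, where $G$ is the explicit rational function whose numerator is the Laurent monomial $\prod_k M_k$ and whose denominator is a product of factors $1-x/y$ coming from the arrows of the quiver. The plan is to expand $G$ into a formal Laurent series in the variables $\bigcup_i \X_i$, apply $\DDelta$ termwise via the $\Delta$-polynomial straightening law, and show that the resulting coefficients of products of Schur polynomials are non-negative.

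\emph{Step 1: expansion with manifestly positive coefficients.} Using the paper's convention $1/(1-x/y)=\sum_{n\geq 0}(x/y)^n$, expand each denominator factor of $G$ as a geometric series. Multiplying out against the monomial numerator produces
$$G=\sum_\mu d_\mu \prod_{i=1}^N \X_i^{\mu_i},\qquad d_\mu\in\Z_{\geq 0},$$
where $\mu=(\mu_1,\ldots,\mu_N)$ ranges over tuples of integer exponent vectors $\mu_i$ of length $|\X_i|$. The key observation is that each $d_\mu$ is a non-negative integer, because every geometric series contributes only positive coefficients and $\prod_k M_k$ is a single monomial.

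\emph{Step 2: straightening.} For any integer vector $\nu$ the $\Delta$-polynomial $\Delta_\nu(\K_i)$ equals either $0$ or $\sgn(\sigma_\nu)\, s_\lambda(\K_i)$ for a unique partition $\lambda$, where $\sigma_\nu$ is the permutation sorting the shifted entries $\nu_j-j$ into strict decreasing order (nonexistence of $\sigma_\nu$, caused by repeated shifted entries or a negative part in the result, yields $\Delta_\nu=0$). Applying $\DDelta$ termwise to the expansion of $G$ gives
$$[\overline{\Omega}_m]=\sum_\lambda c_\lambda \prod_{i=1}^N s_{\lambda_i}(\K_i),\qquad c_\lambda=\sum_\mu d_\mu \prod_{i=1}^N \sgn(\sigma_{\mu_i}),$$
where the inner sum runs over tuples $\mu$ each of whose components $\mu_i$ straightens to the partition $\lambda_i$.

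\emph{Step 3: positivity, the main obstacle.} Buch's conjecture is the assertion $c_\lambda\geq 0$. This is precisely the algebraic positivity conjecture identified in the introduction, and it is the hard step. The natural line of attack is to produce a sign-reversing involution on the set of contributing $\mu$'s that cancels all negative summands, or equivalently to exhibit a combinatorial model---Littlewood--Richardson-style tableaux, puzzles, or honeycomb-type objects---enumerating the remaining positive terms. A promising structural input is the factored form of the interference product $\prod_k I_k$ displayed in (\ref{eqn:COHAmult}); its resemblance to Cohomological Hall Algebra multiplication suggests organizing the expansion of $G$ into packets aligned with the directed partition $\Phi'=\I_1\cup\cdots\cup\I_s$ and inducting on $s$. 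An alternative is geometric: refining the resolution tower of Theorem \ref{thm:reineke} to a Graham/Kumar--Nori-style positive stratification, although as the paper notes, the usual $T$-equivariant techniques have so far failed on this problem. The algebraic route, via direct manipulation of the rational function $G$, is pursued in Kaliszewski \cite{ryan_phd}. The chief difficulty is that the signs $\sgn(\sigma_{\mu_i})$ are multiplied across the $N$ vertices, so a purely local argument at each alphabet is insufficient: one must control the cancellations dictated by the cross-alphabet interference factors $I_k$.
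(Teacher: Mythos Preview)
The statement you are attempting to prove is a \emph{conjecture}; the paper does not prove it and explicitly presents it as open. There is therefore no ``paper's own proof'' to compare against. What the paper does say, in the paragraph immediately following the conjecture, is exactly your Steps~1--2: equation~(\ref{eqn:main3}) already supplies an $f$ with $[\overline{\Omega}_m]=\DDelta(f)$, but that $f$ need not have the required positivity, and the challenge is to replace it by a $\DDelta$-equivalent function whose monomials are supported on partition exponents with nonnegative coefficients.

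Your proposal has a genuine gap, and you identify it yourself: Step~3 is the entire content of the conjecture, and you do not carry it out. Steps~1 and~2 are correct but are only a reformulation. The nonnegativity of the raw Laurent coefficients $d_\mu$ in Step~1 is immediate from the geometric-series expansion, but as you note in Step~3, the straightening in Step~2 introduces the signs $\prod_i \sgn(\sigma_{\mu_i})$, and controlling the resulting signed sum $c_\lambda=\sum_\mu d_\mu \prod_i \sgn(\sigma_{\mu_i})$ is precisely Buch's conjecture restated. Neither the sign-reversing involution you mention, nor the COHA-inspired induction on the directed partition, nor a Graham-type geometric argument is actually executed here; these are plausible directions (and the paper points to \cite{ryan_phd} for partial algebraic progress), but listing them is not a proof. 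In short, your write-up is an accurate summary of why the conjecture is hard and how the paper's iterated residue formula frames it, but it is not a proof attempt that advances beyond what the paper already states.
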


In (\ref{eqn:main3}) we gave an $f$ satisfying (\ref{eqn:buchform}). However, that particular $f$ may not satisfy the positivity condition. The challenge in proving Buch's conjecture is to find an $f$ ``$\DDelta$-equivalent'' to the formula in (\ref{eqn:main3}) but satisfying the positivity conditions. Initial results in this direction are in \cite{ryan_phd}.

\end{document}